\def\Isom{\operatorname{Isom}}
\newtheorem{thm}{Theorem}[section]
\newtheorem{lemma}[thm]{Lemma}
\newtheorem{prop}[thm]{Proposition}
\newtheorem{cor}[thm]{Corollary}
\theoremstyle{definition}
\newtheorem{defi}[thm]{Definition}
\numberwithin{equation}{section}
\begin{document}
\title[The Margulis region and screw parabolic elements]{The Margulis region and screw parabolic elements of bounded type}
\date{\today}
\author{Viveka Erlandsson}
\address{Department of Mathematics,
The City University of New York,
Graduate Center, New York, NY 10016}
\email{verlandsson@gc.cuny.edu}
\subjclass[2000]{Primary 57M50; Secondary 30F40}
\keywords{Margulis region, screw parabolic, horoball}
\maketitle

\begin{abstract}
Given a discrete subgroup of the isometries of $n$-dimensional hyperbolic space there is always a region kept precisely invariant under the stabilizer of a parabolic fixed point, called the Margulis region. While in dimensions 2 and 3 this region is a horoball, it has in general a more complicated shape due to the existence of screw parabolic elements in higher dimensions. In fact, in a discrete group acting on hyperbolic 4-space containing a screw parabolic element with irrational rotation, the corresponding Margulis region does not contain a horoball~\cite{Sus02}. In this paper we describe the asymptotic behavior of the boundary of the Margulis region when the irrational screw parabolic is of bounded type. As a corollary we show that the region is quasi-isometric to a horoball. Although it is shown in \cite{Kim11} that two screw parabolic isometries with irrational rotation are not conjugate by any quasi-isometry of $\mathbb{H}^4$, this corollary implies that their corresponding Margulis regions (in the bounded type case) are quasi-isometric.
\end{abstract}
\section{Introduction}

Let $G$ be a discrete subgroup of $\Isom(\mathbb{H}^n)$, the group of orientation preserving isometries of hyperbolic $n$-space. We are interested in regions in $\mathbb{H}^n$ which are precisely invariant under the stabilizer $G_\alpha$ of a parabolic fixed point $\alpha$ in $G$.

In dimensions 2 and 3 there is always a precisely invariant horoball for each parabolic fixed point. This follows from Shimizu's lemma \cite{Shi63}: the radii of isometric spheres of the elements in $G$ not fixing $\alpha$ are uniformly bounded by a constant.
Although Shimizu's lemma generalizes to higher dimensions in the special case where the parabolic element $g$ is a pure parabolic (i.e. conjugate to a translation), it does not hold in general. In hyperbolic space of dimension 4 and higher there are parabolic elements with a rotational part, called screw parabolic elements. Ohtake \cite{Oht85} showed that when $G$ contains a screw parabolic element with irrational rotational part there is no uniform bound on the radii of the isometric spheres. In fact, Apanasov~\cite{Apa85} has shown that in discrete groups containing such an element, there may not be a precisely invariant horoball.

In all dimensions there is a region precisely invariant under the stabilizer of a parabolic fixed point $\alpha$, called the \emph{Margulis region}. This region consists of the points in $\mathbb{H}^n$ that are moved at most a distance $\epsilon$ by an element of infinite order in the stabilizer of $\alpha$. When $\epsilon$ is small enough (smaller than the Margulis constant) it follows from Margulis Lemma that this region is precisely invariant under $G_\alpha$. In dimensions 2 and 3 the Margulis region corresponding to a parabolic fixed point is a horoball. In higher dimensions it has in general a more complicated shape. In~\cite{Sus02} Susskind gives an explicit description of the Margulis region in $\mathbb{H}^4=\{(x,y,z,u)\,:\,u>0\}$ for a discrete group containing an irrational screw parabolic element. The shape of the boundary of this region is given by a function $u=b(r)$ where $r~=~\sqrt{x^2+y^2}$ and is related to the continued fraction representation of the irrational angle of rotation. In this paper we use Susskind's results to further describe the shape of the Margulis region when the irrational rotation is of \emph{bounded type}, i.e. when the partial quotients in its continued fraction representation are uniformly bounded. We first describe the asymptotic behavior of the boundary of the region by showing that the function $b(r)$ is comparable to $\sqrt{r}$ (Theorem \ref{thm:comp}). As a consequence we show that there is a quasi-isometry of $\mathbb{H}^4$ which maps the Margulis region to a horoball and we say that the two regions are \emph{quasi-isometric in $\mathbb{H}^4$} (Corollary \ref{cor:main}). It should be noted that two irrational screw parabolic elements are not conjugate to each other by any quasi-isometry of $\mathbb{H}^4$, as shown in \cite{Kim11}. However, Corollary \ref{cor:main} implies that the Margulis regions of two irrational screw parabolic elements, in the bounded type case, are quasi-isometric in $\mathbb{H}^4$.

The author wishes to thank Perry Susskind and Saeed Zakeri for helpful conversations, comments, and corrections which greatly aided the completion of this paper. The author would also like to thank her advisor Ara Basmajian for his help and encouragement throughout this work. The results of this paper are contained in the doctoral dissertation \cite{Erl12}.

\section{Background}

Let $\mathbb{H}^4=\{(x,y,z,u)\in\mathbb{R}^4\,:\,u>0\}$ denote the upper half space model of hyperbolic 4-space, and $\Isom(\mathbb{H}^4)$ its orientation-preserving isometry group. We identify $\Isom(\mathbb{H}^4)$ with the orientation-preserving M\"{o}bius group of the boundary at infinity $\hat{\mathbb{R}}^3$, with the usual Poincar\'{e} extension to $\mathbb{H}^4$.  An isometry is called parabolic if it fixes exactly one point on the boundary. A parabolic isometry $g$ is conjugate (in the M\"{o}bius group) to an element of the form $p\mapsto Ap+a$ where $A\in SO(3)$ and $a\in\mathbb{R}^3-\{0\}$. If $A=I$, then this is a translation and we call $g$ a pure parabolic element. Otherwise we call $g$ a screw parabolic element. We say that $g$ is an irrational screw parabolic element if $A$ has infinite order and rational if $A\neq I$ has finite order. If $g$ is screw parabolic, its axis of rotation is the unique Euclidean line kept invariant under its action on the boundary and $g$ rotates around and translates along this axis. When the angle of rotation is an irrational multiple of $2\pi$, $g$ is an irrational screw parabolic element. (See \cite{Bea83} and \cite{Mas88} for more background on hyperbolic space.)

A (closed) horoball based at $\infty$ is a region of the form $\left\{(x,y,z,u)\in\mathbb{H}^4\,|\,u\geq t\right\}$ for some $t>0$, called the height of the horoball. A horoball based at a finite point $\alpha\in\mathbb{R}^3$ is the isometric image of such a region and hence is bounded by a Euclidean sphere tangent to $\partial\mathbb{H}^4$ at $\alpha$.

Suppose $G$ is a subgroup of $\Isom(\mathbb{H}^4$) containing a parabolic element $g$ with fixed point $\alpha$. Let $G_\alpha$ be the stabilizer of $\alpha$ in $G$. A set $X$ is said to be precisely invariant under $G_\alpha$ in $G$ if $h(X)=X$ for all $h\in G_\alpha$ and $f(X)\bigcap X=\emptyset$ for all $f\in G-G_\alpha$.

Let $\epsilon>0$ be given. The Margulis region corresponding to the parabolic fixed point $\alpha$ is defined to be the set of points
\[\{P\in\mathbb{H}^4\,:\,\rho(P,g(P))\leq\epsilon\,\, \text{for some }\,g\in G_\alpha\, \text{ of infinite order} \}\]
where $\rho(\cdot,\cdot)$ is the hyperbolic metric. Let $\epsilon(n)$ denote the Margulis constant. As a consequence of the Margulis Lemma, if $\epsilon<\epsilon(n)$ this region is precisely invariant under $G_\alpha$ (see \cite{Bow93}).

Since the Margulis region only depends on the infinite order elements in $G_{\alpha}$ and parabolic and loxodromic elements cannot share a fixed point in a discrete group \cite{Bow93}, we can assume that $G_{\alpha}$ only contains parabolic elements. Moreover, a parabolic subgroup containing an irrational screw parabolic element must be of rank one, and hence generated by such an element \cite{Kim11}. If $G_{\alpha}$ is generated by pure parabolic elements then the Margulis region is a horoball. If $G_{\alpha}$ contains a rational screw parabolic then it has a finite index subgroup for which the Margulis region is a horoball (the Margulis region for $G_{\alpha}$ is in general slightly different from a horoball and we will briefly return to this case below). Hence the interesting case is when $G_{\alpha}$ contains an irrational screw parabolic element $g$, and since it is necessarily of rank 1 we will assume that $G_{\alpha}=<g>$. Therefore, the Margulis region corresponding to $G_{\alpha}$ is

\[T_{g}=\{P\in\mathbb{H}^4\,:\, \rho(P,g^k(P))\leq\epsilon \,\,\text{for some}\,\, k\in\mathbb{N}\}.\]

By conjugating if necessary, we can assume that $g$ has fixed point $\infty$ and the $z$-axis as its axis of rotation. Set
\[g(x,y,z,u)=(x\cos\theta-y\sin\theta, x\sin\theta+y\cos\theta, z+\sqrt{2},u)\]
where $\theta\in(0,2\pi)$ is an irrational multiple of $2\pi$.

In \cite{Sus02} Susskind gives an explicit description for the boundary of $T_g$, which we will describe next. Using the hyperbolic distance formula (see \cite{Bea83})
\[\cosh\left(\rho(P,Q)\right)=1+\frac{|P-Q|}{2uu_1}\]
where $P=(x,y,z,u)$ and $Q=(x_1,y_1, z_1, u_1)$ it follows that
\[\cosh\left(\rho(P,g^k(P))\right)=1+\frac{(1-\cos k\theta)r^2+k^2}{u^2}\]
where $r=\sqrt{x^2+y^2}$. Since $\rho(\cdot,\cdot)\leq\epsilon$ if and only if $\cosh\left(\rho(\cdot,\cdot)\right)\leq 1+E$ for some $E>0$, $P\in T_g$ if and only if the above quantity is less than or equal to $1+E$ for some $k$. It follows that $P\in T_g$ if and only if
\[Eu^2\geq (1-\cos k\theta)r^2+k^2\]
for some $k$ and hence the boundary of the Margulis region is given by
\[\partial T_{g} = \{(x,y,z,u)\in \mathbb{H}^4\,:\, u=b(\sqrt{x^2+y^2})\}\]
where
\[b(r)=\inf_{k\in\mathbb{N}}\{u_k(r)\}\]
and
\[u_k(r)=\frac{1}{\sqrt{E}}\sqrt{(1-\cos k\theta)r^2+k^2}\]
for $r\geq0$, $k\in\mathbb{N}$ (see \cite{Sus02} for more details). The positive constant $E$ depends only on the Margulis constant $\epsilon(n)$.

Before continuing describing the function $b(r)$ when the rotational angle is irrational, we will briefly discuss the case when $g$ is a rational screw parabolic element. Clearly the computations above still hold for such an element. Let $l$ be the finite order of the rational rotation of $g$. Note that $u_k(r)\geq k/\sqrt{E}$ for all $k, r\geq 0$ and $u_k(r)\to\infty$ as $r\to\infty$ for all $0<k<n$. Since $u_l(r)=l/\sqrt{E}$ it follows that $b(r)=l/\sqrt{E}$ for all large $r$. Hence $b(r)$ is eventually horizontal and the Margulis region is asymptotic to a horoball.

We now return to the case where $g$ is an irrational screw parabolic.
Some basic properties of the functions $u_k(r)$ follow:

\begin{lemma}[\cite{Sus02} Lemma 4]\label{lemma:Su}
For each $k$, the function $u_k(r)$ is uniformly continuous, differentiable, strictly increasing, convex, and,
\begin{enumerate}
  \item $\{u_k(0)\}$ is an increasing sequence
  \item the collection of graphs $\{u_k(r)\}$ do not accumulate anywhere
  \item the graphs of $u_m(r)$ and $u_k(r)$ intersect at most once
  \item for $m>k$, $u_m(r_0)=u_k(r_0)$ if and only if $\cos(m\theta)>\cos(k\theta)$ and \[r_0^2=\displaystyle\frac{m^2-k^2}{\cos(m\theta)-\cos(n\theta)}\]
  \item if $m>k$ and $u_m(r_0)=u_k(r_0)$ then $u_k(r)<u_m(r)$ for all $r<r_0$ and $u_k(r)>u_m(r)$ for all $r>r_0$
\end{enumerate}
\end{lemma}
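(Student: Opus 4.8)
The plan is to reduce every assertion to an elementary statement about the \emph{squared} functions, where the dependence on $r$ becomes affine. I would set $a_k=1-\cos k\theta$; since $\theta$ is an irrational multiple of $2\pi$ we have $k\theta\not\equiv 0\pmod{2\pi}$ for every $k\geq 1$, so $\cos k\theta\neq 1$ and $a_k>0$. Writing $s=r^2$, the square
\[
u_k(r)^2=\frac{1}{E}\left(a_k r^2+k^2\right)=\frac{1}{E}(a_k s+k^2)
\]
is an affine function of $s$ with positive slope $a_k/E$ and positive intercept $k^2/E$. Because each $u_k$ is positive, any comparison between $u_m$ and $u_k$ is equivalent to the same comparison between their squares, and the squares are affine in $s$; this single observation linearizes the entire lemma.

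For the regularity claims I would first note that $a_k r^2+k^2\geq k^2>0$, so the radicand never vanishes and $u_k\in C^\infty([0,\infty))$, giving differentiability. Differentiating yields $u_k'(r)=\tfrac{1}{\sqrt E}\,\tfrac{a_k r}{\sqrt{a_k r^2+k^2}}>0$ for $r>0$, so $u_k$ is strictly increasing, and a direct computation gives $u_k''(r)=\tfrac{1}{\sqrt E}\,\tfrac{a_k k^2}{(a_k r^2+k^2)^{3/2}}>0$, so $u_k$ is strictly convex. Finally $u_k'$ is increasing with limit $\sqrt{a_k/E}$ as $r\to\infty$, hence bounded, so $u_k$ is Lipschitz and therefore uniformly continuous.

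The numbered items then follow quickly. For (1), $u_k(0)=k/\sqrt E$ is strictly increasing in $k$. For (2), the bound $u_k(r)\geq u_k(0)=k/\sqrt E$ shows that any horizontal strip $\{u\leq M\}$ meets only finitely many graphs, so the family is locally finite and cannot accumulate in a bounded region. For (3)--(5) I would equate squares: $u_m(r)^2=u_k(r)^2$ becomes $(a_m-a_k)s=k^2-m^2$, an affine equation in $s=r^2$ with at most one root, proving (3). Taking $m>k$, the right-hand side $k^2-m^2$ is negative and $s=0$ is never a solution, so a solution $r_0>0$ exists exactly when the slope $a_m-a_k=\cos k\theta-\cos m\theta$ is negative, i.e. when $\cos m\theta>\cos k\theta$, and then $r_0^2=(m^2-k^2)/(\cos m\theta-\cos k\theta)$, which is (4). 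For (5), the affine function $u_m(r)^2-u_k(r)^2=\tfrac{1}{E}\big[(\cos k\theta-\cos m\theta)s+(m^2-k^2)\big]$ equals $(m^2-k^2)/E>0$ at $s=0$ and has negative slope, so it is positive for $s<r_0^2$ and negative for $s>r_0^2$; translating back to $u_m$ versus $u_k$ gives exactly the stated orientation.

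Since the whole argument is elementary, I do not expect a genuine obstacle; the only point requiring care is the sign bookkeeping in (4) and (5), where the hypotheses $m>k$ and $\cos m\theta>\cos k\theta$ must be combined correctly so that the crossing value $r_0$ and the two inequalities come out with the orientation claimed. I would also flag the apparent typographical slip in the statement of (4), where $\cos(n\theta)$ should read $\cos(k\theta)$.
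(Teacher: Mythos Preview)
Your proof is correct and complete; the reduction to affine functions of $s=r^2$ is exactly the right way to handle items (3)--(5), and your regularity arguments are sound. Note, however, that the paper does not actually prove this lemma: it is quoted from \cite{Sus02} and stated without proof, so there is nothing to compare against beyond observing that your argument is the natural one and that you correctly caught the typo $\cos(n\theta)$ for $\cos(k\theta)$ in item (4).
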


The function $b(r)$ is uniformly continuous and strictly increasing. In fact, $b(r)\to\infty$ as $r\to\infty$. Hence, in particular, the Margulis region contains no horoball. Moreover:

\begin{thm}[\cite{Sus02} Theorem 6]
There is a sequence $\{r_m\}$ of real numbers $0=r_0<r_1<\cdots<r_m<\cdots$ where $r_m\to\infty$, and a strictly increasing sequence $\{k_m\}$ of positive integers such that the function $b(r)$
\begin{enumerate}
  \item consists of pieces of $u_k(r)$ for infinitely many $k$, that is,
  \[b(r)=u_{k_m}(r)\,\, \mbox{for}\,\, r\in[r_{m-1},r_m];\]
  \item is locally convex and is differentiable except at the countably many points $r_m$. At these points $u_{k_{m-1}}(r_m)=u_{k_m}(r_m)$;
  \item appears concave in the large, that is, $b'(r):=u_{k_m}'(r+)\to 0$ as $r\to\infty$ and $b'(r)>0$ for all $r$.
\end{enumerate}
\end{thm}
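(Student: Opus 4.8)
The plan is to build the breakpoints $r_m$ and the indices $k_m$ inductively from the lower envelope $b(r)=\inf_k u_k(r)$, using the elementary bound $u_k(r)\ge u_k(0)=k/\sqrt{E}$ together with parts (3)--(5) of Lemma~\ref{lemma:Su}. First I would observe that the infimum is attained for every $r$: since $u_k(r)\ge k/\sqrt{E}$, only the finitely many indices with $k\le \sqrt{E}\,u_1(r)$ can realize the minimum, so $b(r)=\min_k u_k(r)$. The same bound shows that on any compact interval $[0,R]$ only the indices with $k\le \sqrt{E}\,u_1(R)$ ever realize the minimum, so the minimizing index can switch only finitely often on $[0,R]$; this produces the discrete breakpoint set $\{r_m\}$ and forces $r_m\to\infty$. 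To see that the minimizing index can only increase, suppose the minimizer were some $j_1$ at $r_1$ and some smaller $j_2<j_1$ at $r_2>r_1$. Comparing the two graphs with parts (4) and (5) of Lemma~\ref{lemma:Su} (the smaller index lies below for small $r$ and above for large $r$), the inequality $u_{j_1}(r_1)\le u_{j_2}(r_1)$ forces $r_1\ge r_0$ while $u_{j_2}(r_2)\le u_{j_1}(r_2)$ forces $r_2\le r_0$, where $r_0$ is the unique intersection point; this contradicts $r_1<r_2$. Hence the minimizing indices strictly increase across breakpoints, giving (1).

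For (2), on the interior of each interval $[r_{m-1},r_m]$ we have $b=u_{k_m}$, which is smooth and convex by Lemma~\ref{lemma:Su}, so $b$ is differentiable and convex there. Each breakpoint is, by construction, the unique point where the two adjacent minimizing functions agree -- the equality asserted in (2) -- and part (4) of Lemma~\ref{lemma:Su} gives its explicit value. At such a point the smaller index crosses from below to above the larger index, so the one-sided derivatives satisfy $u_{k_{m+1}}'(r_m)<u_{k_m}'(r_m)$; hence $b$ is non-differentiable exactly at the $r_m$, and the slope drops at every corner, which is the precise sense in which the graph ``appears concave in the large.''

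The analytic heart of the argument, and the step I expect to be the main obstacle, is (3): the claim $b'(r)\to 0$. A direct computation gives the clean a~priori bound
\[
u_k'(r)^2=\frac{(1-\cos k\theta)}{E}\cdot\frac{(1-\cos k\theta)r^2}{(1-\cos k\theta)r^2+k^2}\le \frac{1-\cos k\theta}{E},
\]
so it suffices to prove that the minimizing indices satisfy $1-\cos k_m\theta\to 0$. Here is where the dynamics of the irrational rotation enter: because $\theta$ is an irrational multiple of $2\pi$, the sequence $\{k\theta\bmod 2\pi\}$ is dense, so for any $\delta>0$ there is a fixed index $k^{*}$ with asymptotic slope $\sqrt{1-\cos k^{*}\theta}/\sqrt{E}<\delta/2$, whence $u_{k^{*}}(r)\le k^{*}/\sqrt{E}+(\delta/2)r$ for all $r$. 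If some minimizing piece $u_{k_m}$ had asymptotic slope at least $\delta$, then on that piece $u_{k_m}(r)\ge \delta r$, and the competition $u_{k_m}(r)\le u_{k^{*}}(r)$ would force $r\le 2k^{*}/(\delta\sqrt{E})$, a bound independent of $m$. Since $r_m\to\infty$, only finitely many pieces can have slope $\ge\delta$; as $\delta$ is arbitrary this yields $1-\cos k_m\theta\to 0$ and hence $b'(r)\to 0$. Finally $b'(r)>0$ for $r>0$ because each $u_k$ is strictly increasing there, completing (3). The delicate point is not the existence of slow competitors (density is immediate) but the slope/intercept trade-off estimate confining every steep piece to a bounded range of $r$; making this uniform in $m$ is the crux, and it is exactly the mechanism that the bounded-type hypothesis later sharpens into the comparison $b(r)\asymp\sqrt{r}$ of Theorem~\ref{thm:comp}.
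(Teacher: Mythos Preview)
The theorem you have chosen is not proved in this paper at all: it is quoted verbatim from Susskind~\cite{Sus02} (as ``Theorem~6'') and used as background, so there is no in-paper proof against which to compare your attempt. Your sketch is nonetheless a correct and self-contained argument for the stated result, relying only on the properties listed in Lemma~\ref{lemma:Su} and the density of the irrational rotation; in particular your slope/intercept competition argument for $b'(r)\to 0$ is sound, and the monotonicity of the minimizing index follows cleanly from part~(5) of Lemma~\ref{lemma:Su} as you indicate (the non-intersecting case being trivial since then the smaller index lies below for all $r$).
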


If for some $k$ the curve $u_k(r)$ meets $b(r)$ at an isolated point (which is possible if more than two curves meet at a point) then we will not consider $u_k(r)$ as a constituent part of $b(r)$.

There is a connection between the constituent pieces of $b(r)$ and continued fractions. Let $x$ be the irrational number $\theta/2\pi$ and $[a_0, a_1, a_2, a_3, ...]$ be $x$ expressed as an infinite continued fraction. We say $x$ is of \emph{bounded type} if the partial quotients $a_n$ are uniformly bounded, i.e. there exists a constant $D$ such that $a_n\leq D$ for all $n$. The rational number $x_n=p_n/q_n=[a_0, a_1, a_2, ..., a_n]$, for $n\geq0$, is called the $n^{th}$ convergent of $x$. The sequence of denominators $\{q_n\}$ of the convergents
is a strictly increasing sequence of positive integers, i.e.
\[0<q_n<q_{n+1}\]
for all $n$. Also, by defining $q_{-2}=1$ and $q_{-1}=0$, the denominators satisfy the recursive formula (see \cite{PZ09})
\begin{align}
q_{n+1}=a_{n+1} q_n+q_{n-1}. \label{recursive}
\end{align}
The functions $u_{k_m}(r)$ that appear as a constituent piece of $b(r)$ are related to the continued fraction expansion of $x$ in the following way:

\begin{thm}[\cite{Sus02} Theorem 8]\label{thm:index}
If $b(r)=u_q(r)$ for some $r>0$, then $q$ is the denominator of a convergent of $x$.
\end{thm}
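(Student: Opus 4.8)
The plan is to convert the geometric statement that the graph of $u_q$ touches the lower envelope $b$ into an arithmetic statement about how well $q\theta$ approximates a multiple of $2\pi$, and then to recognize that arithmetic statement as the classical characterization of the denominators of convergents.

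First I would rewrite the coefficient that governs the growth of $u_k$. Since $1-\cos k\theta = 2\sin^2(\pi k x)$ with $x=\theta/2\pi$, and since $t\mapsto\sin^2(\pi t)$ depends only on the distance $\|t\|$ from $t$ to the nearest integer and is strictly increasing on $\|t\|\in[0,1/2]$, the quantity $c_k := 1-\cos k\theta = 2\sin^2(\pi\|kx\|)$ is a strictly monotone function of $\|kx\|$. Hence $c_{q'} < c_q \iff \|q'x\| < \|qx\|$, and because $x$ is irrational there are never ties: an equality $\|q'x\| = \|qx\|$ would force $(q\mp q')x\in\mathbb Z$, which is impossible unless $q=q'$.

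The key comparison is then elementary. For indices $q' < q$ one has, for every $r\ge 0$, the identity $E\,[u_{q'}(r)^2 - u_q(r)^2] = (c_{q'}-c_q)\,r^2 + (q'^2 - q^2)$. The constant term $q'^2 - q^2$ is strictly negative, so if in addition $c_{q'} \le c_q$ then the whole expression is strictly negative for all $r\ge 0$; that is, the graph of $u_{q'}$ lies strictly below that of $u_q$ everywhere (a fact also readable off from Lemma \ref{lemma:Su}(4)--(5)), and so $u_q$ can never meet the infimum $b$. Contrapositively, if $b(r_*) = u_q(r_*)$ for some $r_* > 0$, then for every $q'$ with $1\le q' < q$ we must have $c_{q'} > c_q$, i.e. $\|q'x\| > \|qx\|$. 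In other words $q$ strictly minimizes $\|q'x\|$ over all smaller positive $q'$, so $q$ is the denominator of a \emph{best approximation of the second kind} for $x$.

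To finish I would invoke the classical theorem of Lagrange that every best approximation of the second kind is a convergent, equivalently that the integers $q\ge 1$ satisfying $\|qx\| < \|q'x\|$ for all $0 < q' < q$ are exactly the denominators $q_n$ of the convergents of $x$ (see, e.g., \cite{PZ09}); this yields $q=q_n$ for some $n$, as claimed. The geometric half of the argument is routine algebra, so the crux is purely number-theoretic: correctly identifying that the envelope condition singles out best approximations of the \emph{second} kind (rather than of the first kind) and matching these with the convergent denominators. I expect the only care needed will be the bookkeeping at the smallest indices (such as $q=1$) together with the boundary conventions $q_{-1}=0$ and $q_{-2}=1$, none of which disturbs the main line of reasoning.
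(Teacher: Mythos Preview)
Your argument is correct and is essentially the same as the paper's: show that $b(r)=u_q(r)$ forces $\|q x\|<\|q'x\|$ (equivalently $\|q\theta\|<\|q'\theta\|$) for every $0<q'<q$, and then invoke the classical identification of such ``closest return moments'' with the denominators $q_n$ of the convergents. The only superficial differences are terminological (you say ``best approximation of the second kind'' where the paper says ``closest return moment'') and that you phrase the comparison contrapositively via the explicit formula for $u_{q'}^2-u_q^2$, whereas the paper reads the inequality $1-\cos q\theta<1-\cos k\theta$ directly from $u_q(r)\le u_k(r)$ with $k<q$ and $r>0$.
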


Hence, if $u_{k_{m-1}}(r)$ and $u_{k_m}(r)$ are two consecutive constituent pieces of $b(r)$, then $k_{m-1}=q_l$ and $k_m=q_n$ for some $l<n$.

\section{Asymptotic behavior of $b(r)$}

We are interested in the coarse shape of the Margulis region and hence we will look at the asymptotic behavior of the function $b(r)$. As mentioned above, Susskind \cite{Sus02} showed that the graph of $b(r)$ appears concave in the large. Here we will show, when $\theta$ is of bounded type, that the graph roughly behaves like $\sqrt{r}$. More precisely, we will show that $b(r)/\sqrt{r}$ is bounded for large $r$ (see Theorem \ref{thm:comp}).

By Theorem \ref{thm:index} the indices $k$ of the curves $u_k(r)$ that appear as constituent pieces of $b(r)$ are denominators of the convergents of $x=\theta/2\pi$. As before, let $[a_0, a_1, a_2, a_3, ...]$ be the continued fraction expansion of $x$ and denote the denominator of the $n^{th}$ convergent by $q_n$. For ease of notation we will from here on denote the curve $u_{q_n}(r)$ by $v_n(r)$, that is
\[v_n=\frac{1}{\sqrt{E}}\sqrt{(1-\cos q_n\theta)r^2+q_n^2}.\]
For integers $k>0$ define
\[\|k\theta\|=\min\{|k\theta-2\pi p|\,:\, p\in\mathbb{Z}\}.\]
Since $\theta$ is an irrational multiple of $2\pi$ any orbit under rotation by $\theta$ is dense on the unit circle. In particular, $\|k\theta\|$ comes close to 0 (equivalently, $e^{ik\theta}$ comes close to 1) for infinitely many values of $k$. A \emph{closest return moment} \cite{PZ09} of the orbit $\{e^{ik\theta}\}$ is an integer $q>0$ such that $\|q\theta\|<\|k\theta\|$ for all $0<k<q$. Since $\theta$ is irrational, there are infinitely many closest return moments, in fact:

\begin{thm}[\cite{PZ09}] The denominators $\{q_n\}$ of the convergents of $x=\theta/2\pi$ constitute the closest return moments of any orbit under rotation by $\theta$.
\end{thm}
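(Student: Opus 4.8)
The plan is to translate the statement into classical Diophantine approximation and recognize it as the law of best approximation. Write $\langle t\rangle=\min_{p\in\mathbb{Z}}|t-p|$ for the distance to the nearest integer. Since $\theta=2\pi x$, for every positive integer $k$ we have $\|k\theta\|=2\pi\min_{p\in\mathbb{Z}}|kx-p|=2\pi\langle kx\rangle$. Hence $q$ is a closest return moment for rotation by $\theta$ exactly when $\langle qx\rangle<\langle kx\rangle$ for all $0<k<q$, i.e. exactly when $q$ is a best approximation denominator of $x$ ``of the second kind.'' So the theorem asserts that these best approximation denominators coincide with the convergent denominators $q_n$, which I would establish from the elementary theory of continued fractions.

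I would set $\theta_n=|q_nx-p_n|$ and record two standard ingredients. First, $\langle q_nx\rangle=\theta_n$: this holds for $n\geq1$ because $\theta_n<1/q_{n+1}\leq 1/2$, so $p_n$ is genuinely the nearest integer to $q_nx$. Second, the sequence $\theta_n$ is strictly decreasing, which follows from the relation $\theta_{n-1}=a_{n+1}\theta_n+\theta_{n+1}$ together with $a_{n+1}\geq1$. The substantive input is the law of best approximation of the second kind: for every integer $k$ with $0<k<q_{n+1}$ and $k\neq q_n$ one has $\langle kx\rangle>\theta_n$. I would prove this by writing a given $k$ and any integer $p$ as $k=uq_n+vq_{n+1}$, $p=up_n+vp_{n+1}$, which is possible in integers because $p_nq_{n+1}-p_{n+1}q_n=\pm1$. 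The constraint $0<k<q_{n+1}$ forces $u$ and $v$ to be of opposite sign (or $v=0$), and since consecutive quantities $q_nx-p_n$ and $q_{n+1}x-p_{n+1}$ have opposite signs, the two summands in $kx-p=u(q_nx-p_n)+v(q_{n+1}x-p_{n+1})$ have the same sign; thus $|kx-p|=|u|\theta_n+|v|\theta_{n+1}$, which exceeds $\theta_n$ whenever $k\neq q_n$. Taking the minimum over $p$ gives $\langle kx\rangle>\theta_n$.

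With these facts both inclusions are short. For one direction, fix $n$ and take any $0<k<q_n$; applying best approximation at index $n-1$ gives $\langle kx\rangle\geq\theta_{n-1}$, and the strict monotonicity $\theta_{n-1}>\theta_n=\langle q_nx\rangle$ then yields $\langle q_nx\rangle<\langle kx\rangle$, so $q_n$ is a closest return moment. Conversely, suppose $q$ is a closest return moment but not a convergent denominator. Since the $q_n$ increase to $\infty$, there is an $n$ with $q_n<q<q_{n+1}$; best approximation gives $\langle qx\rangle>\theta_n=\langle q_nx\rangle$ with $0<q_n<q$, contradicting the defining inequality for a closest return moment. Hence every closest return moment equals some $q_n$.

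The main obstacle is the law of best approximation of the second kind; once the distance-to-nearest-integer reformulation is in place, everything else is bookkeeping. I would be careful about two minor points: verifying that $p_n$ is indeed the nearest integer to $q_nx$ (so that $\langle q_nx\rangle=\theta_n$ and not something smaller), and handling small-index edge cases (for instance $a_1=1$, which gives $q_0=q_1$), where the strict inequality in the definition of a closest return moment must be checked by hand.
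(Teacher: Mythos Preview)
The paper does not prove this theorem; it is simply quoted from the reference \cite{PZ09} and used as a black box (the paper only sketches, immediately afterward, how it implies Theorem~\ref{thm:index}). So there is no ``paper's own proof'' to compare against.

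Your proposal is the standard classical argument: reduce $\|k\theta\|$ to $2\pi\langle kx\rangle$ and invoke the law of best approximation of the second kind for continued fractions. The outline is correct and is exactly how one proves this result in, e.g., Khinchin's or Hardy--Wright's treatment. Two small remarks on execution. First, in your proof of the best-approximation law you must argue for \emph{every} integer $p$, not a single one; your decomposition $k=uq_n+vq_{n+1}$, $p=up_n+vp_{n+1}$ does determine $(u,v)$ uniquely from $(k,p)$, so as $p$ varies so do both $u$ and $v$, and you should check the sign analysis survives this (it does, but the phrasing ``taking the minimum over $p$'' hides a step). Second, the case $v=0$ with $u\geq 2$ can actually occur when $a_{n+1}\geq 2$, and should be dispatched separately (it gives $|kx-p|\geq 2\theta_n$). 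With those points tightened the argument is complete; the edge case you flag ($a_1=1$, so $q_0=q_1=1$) is indeed the only place the strict inequality needs a direct check.
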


\noindent This fact is the main ingredient in proving that the constituent pieces of $b(r)$ are indexed by denominators of convergents (Theorem \ref{thm:index}). Fix $r$ and suppose $b(r)=u_q(r)$. Then, for all $k<q$, we have $u_q(r)\leq u_k(r)$ and it follows from the equation defining $u_n$ that $1-\cos q\theta<1-\cos k\theta$. Hence $\|q\theta\|<\|k\theta\|$ for all $k<n$. That is, $q$ is a closest return moment and therefore a denominator of a convergent.

Hence $q_n\theta$ are the angles of interest, and we will use the following facts about their norms:

\begin{lemma}[\cite{PZ09}]\label{lemma:Za}
For every $n\geq 1$,
\begin{enumerate}
  \item $0<\|q_{n+1}\theta\|<\|q_n\theta\|$
  \item $\|q_n\theta\|=a_{n+2}\|q_{n+1}\theta\|+\|q_{n+2}\theta\|$
  \item $\displaystyle\frac{\pi}{q_{n+1}}<\|q_n\theta\|<\frac{2\pi}{q_{n+1}}$
\end{enumerate}
\end{lemma}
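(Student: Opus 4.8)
The plan is to reduce everything to the classical metric theory of continued fractions. First I would normalize: writing $x=\theta/2\pi$, note that
\[
\|k\theta\|=\min_{p\in\mathbb{Z}}|k\theta-2\pi p|=2\pi\min_{p\in\mathbb{Z}}|kx-p|=2\pi\,d(kx,\mathbb{Z}),
\]
where $d(\cdot,\mathbb{Z})$ denotes the distance to the nearest integer. Thus all three assertions become statements about $D_n:=d(q_nx,\mathbb{Z})$, and the constants $\pi$ and $2\pi$ in part (3) will come out of the factor $2\pi$ once I show $\tfrac{1}{2q_{n+1}}<D_n<\tfrac{1}{q_{n+1}}$. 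Throughout I would use the convention $p_{-1}=1,\ q_{-1}=0$, the recurrence \eqref{recursive} together with its analogue $p_{n+1}=a_{n+1}p_n+p_{n-1}$, and the determinant identity $p_nq_{n-1}-p_{n-1}q_n=(-1)^{n-1}$ (proved by induction from the recurrences).

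The crux is a single exact formula. Let $\xi_{n+1}=[a_{n+1};a_{n+2},a_{n+3},\dots]$ be the $(n{+}1)$st complete quotient, so that $x=\dfrac{\xi_{n+1}p_n+p_{n-1}}{\xi_{n+1}q_n+q_{n-1}}$. A direct substitution, using the determinant identity to collapse the numerator, gives
\[
q_nx-p_n=\frac{(-1)^n}{\xi_{n+1}q_n+q_{n-1}}.
\]
Since the denominator is positive, this simultaneously pins down the sign $\operatorname{sign}(q_nx-p_n)=(-1)^n$ and the magnitude. Because $\xi_{n+1}q_n+q_{n-1}\ge q_{n+1}\ge 2$ for $n\ge1$, we have $|q_nx-p_n|<\tfrac12$, so $p_n$ is genuinely the nearest integer and $D_n=|q_nx-p_n|=\bigl(\xi_{n+1}q_n+q_{n-1}\bigr)^{-1}$. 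I expect this step to be the main obstacle, since everything else is a formal consequence of it; the care required is in justifying the complete-quotient representation of $x$ and the sign alternation, and in checking the $D_n<\tfrac12$ inequality that makes $D_n$ equal to the nearest-integer distance rather than merely $|q_nx-p_n|$.

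With the formula in hand the three parts follow quickly. For (3), the inequalities $a_{n+1}<\xi_{n+1}<a_{n+1}+1$ (immediate from $\xi_{n+1}=a_{n+1}+\xi_{n+2}^{-1}$ with $\xi_{n+2}>1$) and \eqref{recursive} sandwich the denominator as $q_{n+1}<\xi_{n+1}q_n+q_{n-1}<q_{n+1}+q_n<2q_{n+1}$, using $q_n<q_{n+1}$; inverting and multiplying by $2\pi$ yields $\tfrac{\pi}{q_{n+1}}<\|q_n\theta\|<\tfrac{2\pi}{q_{n+1}}$. For (2), I set $\delta_j=q_jx-p_j$ and apply the two recurrences at index $n+2$ to obtain $\delta_{n+2}=a_{n+2}\delta_{n+1}+\delta_n$; substituting $\delta_j=(-1)^jD_j$ and dividing by $(-1)^n$ gives $D_n=a_{n+2}D_{n+1}+D_{n+2}$, which is exactly $\|q_n\theta\|=a_{n+2}\|q_{n+1}\theta\|+\|q_{n+2}\theta\|$. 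Finally (1) is a corollary of (2): positivity of $D_n$ is the irrationality of $x$ (so $q_nx\notin\mathbb{Z}$), and since $a_{n+2}\ge1$ and $D_{n+2}>0$ the identity in (2) forces $D_n>D_{n+1}$, i.e. $0<\|q_{n+1}\theta\|<\|q_n\theta\|$.
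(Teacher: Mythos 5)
Your proof is correct, but there is nothing in the paper to compare it against: the paper does not prove this lemma at all, it simply quotes it from the reference [PZ09] (these are standard facts about continued fractions, and the paper treats them as imported background). What you have supplied is the classical textbook derivation, and it is sound. Your reduction $\|k\theta\|=2\pi\,d(kx,\mathbb{Z})$ is the right normalization, and the exact formula $q_nx-p_n=(-1)^n/(\xi_{n+1}q_n+q_{n-1})$ is indeed the crux: once you have it, part (3) follows from the sandwich $q_{n+1}<\xi_{n+1}q_n+q_{n-1}<q_{n+1}+q_n<2q_{n+1}$ (valid for $n\ge1$ since then $q_n<q_{n+1}$), part (2) from the linearity of $\delta_j=q_jx-p_j$ under the recurrence together with the sign alternation, and part (1) from (2) with $a_{n+2}\ge1$ and irrationality. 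You were also right to flag the nearest-integer issue: the identification $d(q_nx,\mathbb{Z})=|q_nx-p_n|$ needs $|q_nx-p_n|\le\tfrac12$, which your strict inequality $\xi_{n+1}q_n+q_{n-1}>q_{n+1}\ge2$ (for $n\ge1$) delivers; this is exactly the kind of detail a citation to [PZ09] silently absorbs. In short, your argument is a complete, self-contained replacement for the paper's citation, at the cost of developing the complete-quotient representation and the determinant identity, which the paper never needs to state.
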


Note that for $m>n$, $\|q_{m}\theta\|<\|q_n\theta\|$ implies that $\cos{q_{m}\theta}>\cos{q_{n}\theta}$ so that by Lemma 1, $v_{m}(r)$ and $v_{n}(r)$ must always intersect. We have the following condition for determining whether or not $v_{n}(r)$ is a constituent piece of $b(r)$:

\begin{lemma}\label{lemma:iff}
Fix $n$ and let $r_m$, for each $m$, correspond to the intersection point of $v_{n}(r)$ and $v_{m}(r)$. Then $v_{n}(r)$ does not appear as a constituent piece of $b(r)$ if and only if there exist integers $k, m, k<n<m$ such that $r_k\geq r_m$.
\end{lemma}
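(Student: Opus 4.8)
The plan is to pin down exactly the set of $r$ on which the curve $v_n$ coincides with the lower envelope $b$, and then to decide when this set contains a genuine interval. First note that, by Theorem~\ref{thm:index}, the envelope is assembled entirely from the curves $v_j$, and since $u_k(r)\ge k/\sqrt E\to\infty$ for fixed $r$ the infimum defining $b$ is attained; hence $b(r)=\min_j v_j(r)$, and it suffices to compare $v_n$ against the other $v_j$. For any $j\neq n$ the two curves meet at most once by Lemma~\ref{lemma:Su}(3); and because the norms $\|q_j\theta\|$ are strictly monotone in the index by Lemma~\ref{lemma:Za}(1), the curve with the larger denominator has the strictly larger cosine, so the hypothesis of Lemma~\ref{lemma:Su}(4) is met and the two curves do intersect, at the point $r_j$. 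By Lemma~\ref{lemma:Su}(5) the lower-index curve lies below the higher-index one to the left of the intersection and above it to the right.

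Translating this into the present notation gives, for $m>n$, that $v_n(r)\le v_m(r)$ exactly when $r\le r_m$, and for $k<n$, that $v_n(r)\le v_k(r)$ exactly when $r\ge r_k$. Combining over all $j$, the curve $v_n$ realizes the minimum $b(r)$ precisely when $r\ge r_k$ for every $k<n$ and $r\le r_m$ for every $m>n$, i.e. precisely on the interval $[L_n,R_n]$ where
$$L_n=\max_{k<n}r_k \quad\text{and}\quad R_n=\inf_{m>n}r_m.$$
Thus $v_n$ is a constituent piece of $b$ exactly when $L_n<R_n$: if $L_n=R_n$ the curve meets $b$ only at the isolated point $L_n$ (not counted as a constituent piece), and if $L_n>R_n$ the interval is empty. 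Equivalently, $v_n$ fails to be a constituent piece if and only if $L_n\ge R_n$.

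It remains to recast $L_n\ge R_n$ as the existence of indices $k<n<m$ with $r_k\ge r_m$. The maximum defining $L_n$ ranges over the finitely many $k<n$ and is attained at some $k^*$. The decisive step is that the infimum defining $R_n$ is in fact a minimum: by Lemma~\ref{lemma:Su}(4),
$$r_m^2=\frac{q_m^2-q_n^2}{\cos q_m\theta-\cos q_n\theta},$$
and as $m\to\infty$ the numerator tends to infinity while, by Lemma~\ref{lemma:Za}(3), $\|q_m\theta\|\to 0$ so $\cos q_m\theta\to1$ and the denominator tends to the fixed positive number $1-\cos q_n\theta$. Hence $r_m\to\infty$, only finitely many $r_m$ can be small, and $R_n=\min_{m>n}r_m=r_{m^*}$ for some $m^*>n$. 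With both extrema attained, $L_n\ge R_n$ holds if and only if $r_{k^*}\ge r_{m^*}$, which is the asserted existence of a pair $k<n<m$ with $r_k\ge r_m$; conversely any such pair gives $L_n\ge r_k\ge r_m\ge R_n$. The main obstacle is precisely this control of an infimum over the infinite family $\{m>n\}$: the asymptotics $r_m\to\infty$ are what guarantee $R_n$ is attained and thereby rule out the degenerate case of an unattained infimum coinciding with $L_n$, making the clean ``there exist integers'' statement correct.
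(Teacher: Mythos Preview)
Your argument is correct, and the route you take differs from the paper's in an instructive way. Both proofs handle the forward implication essentially the same way: once $r_k\ge r_m$ with $k<n<m$, Lemma~\ref{lemma:Su}(5) makes $\min\{v_k,v_m\}$ dominate $v_n$ from below except possibly at one point. For the converse, however, the paper invokes the structure theorem for $b(r)$ (Theorem~6 of \cite{Sus02}): if $v_n$ is absent there must be two \emph{consecutive} constituent pieces $v_k,v_m$ with $k<n<m$, and then one argues directly that their crossing with $v_n$ forces $r_k\ge r_m$. You instead determine the exact locus $\{r:v_n(r)=b(r)\}$ as the interval $[L_n,R_n]$ with $L_n=\max_{k<n}r_k$ and $R_n=\inf_{m>n}r_m$, and reduce to $L_n\ge R_n$; the extra work is showing $R_n$ is actually attained, which you do via the asymptotics $r_m\to\infty$ coming from Lemma~\ref{lemma:Za}(3).

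What each approach buys: the paper's argument is shorter and identifies the witnessing pair $(k,m)$ as adjacent pieces of $b$, but it leans on the prior structural description of $b$ as a countable concatenation of arcs. Your approach is more self-contained---it needs only Theorem~\ref{thm:index} and the crossing properties of Lemma~\ref{lemma:Su}, not the finer piecewise decomposition---and yields as a byproduct the explicit interval $[L_n,R_n]$ on which $v_n$ realizes the envelope, which is useful later (this is exactly what is used in the proof of Theorem~\ref{thm:comp}). Your observation that the infimum over $m>n$ must be a minimum, lest the biconditional fail, is precisely the point that distinguishes the two strategies.
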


\begin{proof}
Suppose there exist integers $k, m, k<n<m$ such that $r_k\geq r_m$. Then by property 5 of Lemma \ref{lemma:Su}, $v_k(r)<v_n(r)$ for $r<r_k$ and $v_{m}(r)<v_{n}(r)$ for $r>r_m$. Since $r_k\geq r_m$ the curve $v(r)=\min\{v_{k}(r), v_{m}(r)\}$ lies strictly below $v_n(r)$ either for all $r$ (if $r_k>r_m$) or for all $r\neq r_k$ (if $r_k=r_m$). In either case $v_{n}(r)$ is not a constituent piece of $b(r)$.

Conversely, suppose $v_{n}(r)$ does not appear as a constituent part of $b(r)$. Then $v_{{k}}(r)$ and $v_{{m}}(r)$ are consecutive pieces of $b(r)$ for some $k<n<m$. Since $k<n<m$ it follows from property 5 of Lemma \ref{lemma:Su} that $v_{n}(r)<v_{{k}}(r)$ for all $r>r_k$ and $v_{n}(r)<v_{{m}}(r)$ for all $r<r_m$. In order for $v_n(r)$ to not be a constituent part of $b(r)$ we must have $r_k\geq r_m$.
\end{proof}

\subsection{Bounded Type}

Throughout this section we will assume that $x=\theta/2\pi$ is of bounded type, i.e. that there is a constant $D$ such that $a_n\leq D$ for all $n$. Under this assumption we will show that the function $b(r)$ is \emph{comparable} to $\sqrt{r}$.

\begin{defi}
Let $F$ and $G$ be functions of $m$. We say that $F$ \emph{is comparable to} $G$ if there exist positive constants $k_1$, $k_2$ and $M$ such that
\[k_1G(m)<F(m)<k_2G(m)\]
for all $m>M$.
\end{defi}

\noindent The constants $k_1$ and $k_2$ will only depend on $D$, the uniform bound on the partial quotients $a_n$. Note that the notion of functions being comparable is a transitive property.

The bounded type assumption implies that the denominators of the convergents of $x$ do not grow too fast:

\begin{lemma}\label{lemma:ratio}
Fix an integer $k>0$. Then $q_{n+k}$ is comparable to $q_n$. In particular, $q_n<q_{n+k}<(D+1)^k q_n$.
\end{lemma}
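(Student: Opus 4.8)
The plan is to extract everything from the recursion $q_{n+1}=a_{n+1}q_n+q_{n-1}$ in \eqref{recursive}, combined with the bounded-type hypothesis $a_{n}\leq D$ and the strict monotonicity $0<q_{n-1}<q_n$ of the denominators. The lower bound $q_n<q_{n+k}$ is immediate from strict monotonicity, so the entire content of the statement is the upper bound $q_{n+k}<(D+1)^k q_n$, and the comparability then follows by reading off the constants.

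First I would establish the single-step estimate. Using the recursion, the bound $a_{n+1}\leq D$, and $q_{n-1}<q_n$, one gets
\[
q_{n+1}=a_{n+1}q_n+q_{n-1}\leq D q_n+q_{n-1}<D q_n+q_n=(D+1)q_n.
\]
Together with $q_n<q_{n+1}$ this gives the base case $q_n<q_{n+1}<(D+1)q_n$.

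Next I would iterate this estimate. Applying the single-step bound repeatedly (formally, by induction on $k$) yields
\[
q_{n+k}<(D+1)q_{n+k-1}<(D+1)^2 q_{n+k-2}<\cdots<(D+1)^k q_n,
\]
while $q_n<q_{n+1}<\cdots<q_{n+k}$ by monotonicity. This is exactly the asserted chain $q_n<q_{n+k}<(D+1)^k q_n$.

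Finally, comparability follows directly from the definition with $k_1=1$, $k_2=(D+1)^k$, and $M$ arbitrary, since the two-sided bound holds for every $n$ rather than only for large $n$. There is no genuine obstacle here; the only points requiring care are that the upper estimate uses both the bounded-type assumption and strict monotonicity at once, and that the resulting constants depend only on $D$ (and the fixed $k$), as the definition of comparable requires.
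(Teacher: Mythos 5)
Your proof is correct and follows essentially the same route as the paper's: the lower bound from strict monotonicity of $\{q_n\}$, the single-step estimate $q_{n+1}\leq Dq_n+q_{n-1}<(D+1)q_n$ from the recursion and the bounded-type hypothesis, and then induction on $k$. The only difference is that you spell out the induction chain and the choice of comparability constants explicitly, which the paper leaves implicit.
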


\begin{proof}
 Fix an integer $k>0$. Clearly, since $\{q_n\}$ is an increasing sequence, $q_{n}<q_{n+k}$. Moreover, using $a_n\leq D$ for all $n$ together with the recursive formula \eqref{recursive}, we have $q_{n+1}\leq Dq_{n}+q_{n-1}<(D+1)q_n$. By induction it follows that $q_{n+k}<(D+1)^kq_n$.
\end{proof}

We will describe the asymptotic behavior of $b(r)$ by first studying the behavior of certain intersection points of the functions $v_{n}(r)$, namely the intersection of $v_{n}(r)$ and $v_{{n+2}}(r)$. Denote the $r-$coordinate of this point by $r_n$, that is:

\[r_n^2=\frac{q_{n+2}^2-q_n^2}{\cos (q_{n+2}\theta)-\cos (q_{n}\theta)}.\]\\

\begin{prop}\label{prop:r}
$r_n$ is comparable to $q_{n}^2$
\end{prop}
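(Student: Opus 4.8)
The plan is to estimate the numerator $q_{n+2}^2-q_n^2$ and the denominator $\cos(q_{n+2}\theta)-\cos(q_n\theta)$ of $r_n^2$ separately, showing that the former is comparable to $q_n^2$ and the latter to $1/q_n^2$, so that $r_n^2$ is comparable to $q_n^4$. Writing $a=\|q_n\theta\|$ and $c=\|q_{n+2}\theta\|$, the first step is to reduce the denominator to these norms: since $\cos$ is even and $2\pi$-periodic and $\|k\theta\|$ is the distance from $k\theta$ to the nearest multiple of $2\pi$, we have $\cos(q_n\theta)=\cos a$ and $\cos(q_{n+2}\theta)=\cos c$, with $0<c<a\le\pi$ by Lemma~\ref{lemma:Za}(1). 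I would then use the elementary two-sided bound $\tfrac{2}{\pi^2}x^2\le 1-\cos x\le\tfrac12 x^2$ on $[0,\pi]$, which follows from $1-\cos x=2\sin^2(x/2)$ together with Jordan's inequality, to control $\cos c-\cos a=(1-\cos a)-(1-\cos c)$.

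The upper bound on the denominator is immediate: $\cos c-\cos a\le 1-\cos a\le\tfrac12 a^2$. The lower bound requires that $c$ be appreciably smaller than $a$, and this is where Lemma~\ref{lemma:Za} enters. Combining part (2), $\|q_n\theta\|=a_{n+2}\|q_{n+1}\theta\|+\|q_{n+2}\theta\|$, with $a_{n+2}\ge1$ and part (1), $\|q_{n+1}\theta\|>\|q_{n+2}\theta\|$, yields $a>2c$, hence $c^2<\tfrac14 a^2$ and
\[\cos c-\cos a\ge\tfrac{2}{\pi^2}a^2-\tfrac12 c^2>\left(\tfrac{2}{\pi^2}-\tfrac18\right)a^2>0.\]
Thus the denominator is comparable to $a^2=\|q_n\theta\|^2$. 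By Lemma~\ref{lemma:Za}(3) this is comparable to $1/q_{n+1}^2$, and by Lemma~\ref{lemma:ratio} (where the bounded type hypothesis is used) $1/q_{n+1}^2$ is comparable to $1/q_n^2$; so the denominator is comparable to $q_n^{-2}$.

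For the numerator, the recursion~\eqref{recursive} gives $q_{n+2}=a_{n+2}q_{n+1}+q_n\ge q_{n+1}+q_n>2q_n$ for large $n$, whence $q_{n+2}^2-q_n^2>3q_n^2$, while Lemma~\ref{lemma:ratio} gives $q_{n+2}<(D+1)^2q_n$ and therefore $q_{n+2}^2-q_n^2<((D+1)^4-1)q_n^2$. Hence the numerator is comparable to $q_n^2$. Dividing numerator by denominator shows $r_n^2$ is comparable to $q_n^2/q_n^{-2}=q_n^4$, so $r_n$ is comparable to $q_n^2$, with all constants depending only on $D$, as claimed.

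The main obstacle is the lower bound on the denominator: a priori $\cos(q_{n+2}\theta)$ and $\cos(q_n\theta)$ could be so close that their difference is much smaller than $\|q_n\theta\|^2$, which would destroy the comparison and force $r_n$ to grow faster than $q_n^2$. The real content is therefore the quantitative separation $\|q_{n+2}\theta\|<\tfrac12\|q_n\theta\|$ extracted from Lemma~\ref{lemma:Za}, and the narrow numerical margin $\tfrac{2}{\pi^2}-\tfrac18>0$ is precisely what keeps the estimate valid. Everything else is a routine application of Lemmas~\ref{lemma:Za} and~\ref{lemma:ratio}, with bounded type entering only to pass between the consecutive denominators $q_{n+1}$ and $q_n$.
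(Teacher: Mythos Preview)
Your proof is correct and follows the same architecture as the paper: you split $r_n^2$ into numerator and denominator and show they are comparable to $q_n^2$ and $q_n^{-2}$ respectively, which is exactly the content of the paper's Lemmas~\ref{lemma:numerator} and~\ref{lemma:denominator}. Your treatment of the numerator is essentially the paper's.

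Where you genuinely diverge is in the denominator estimate. The paper handles $\cos(q_{n+2}\theta)-\cos(q_n\theta)$ with the mean value theorem: writing $a=\|q_{n+2}\theta\|$, $b=\|q_n\theta\|$, one has $\cos a-\cos b=(b-a)\sin\xi$ for some $\xi\in(a,b)$, and since $b-a=a_{n+2}\|q_{n+1}\theta\|$ by Lemma~\ref{lemma:Za}(2), both the upper and lower bounds fall out of Jordan's inequality applied at the single point $\xi$, together with Lemma~\ref{lemma:Za}(3). No numerical coincidence is needed. Your route instead bounds $1-\cos x$ above and below on $[0,\pi]$ and subtracts, which forces you to extract the quantitative separation $\|q_{n+2}\theta\|<\tfrac12\|q_n\theta\|$ from Lemma~\ref{lemma:Za}(1)--(2) and then to rely on the inequality $\tfrac{2}{\pi^2}>\tfrac18$ (equivalently $\pi^2<16$) to keep the lower bound positive. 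This is more elementary in that it avoids the mean value theorem, but the margin is thin and somewhat accidental; the paper's MVT argument is more robust and yields cleaner constants. Either way the dependence on bounded type is the same, entering only through Lemma~\ref{lemma:ratio} to pass between $q_n$, $q_{n+1}$, $q_{n+2}$, $q_{n+3}$.
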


The proof of the proposition will follow immediately from the following two lemmas, where we look at the numerator and denominator in the expression for $r_n^2$ separately.

\begin{lemma}\label{lemma:numerator}
$q_{n+2}^2-q_n^2$ is comparable to $q_{n}^2$
\end{lemma}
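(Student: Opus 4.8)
The plan is to control the ratio $(q_{n+2}^2-q_n^2)/q_n^2$ from above and below by positive constants depending only on $D$, treating the two inequalities in the definition of comparability separately. The upper bound is immediate: applying Lemma \ref{lemma:ratio} with $k=2$ gives $q_{n+2}<(D+1)^2 q_n$, so squaring and discarding the subtracted term yields
\[
q_{n+2}^2-q_n^2 < q_{n+2}^2 < (D+1)^4 q_n^2,
\]
which shows $k_2=(D+1)^4$ works and isolates where the bounded type hypothesis enters.

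For the lower bound I would not use Lemma \ref{lemma:ratio}, since that inequality only limits how fast the denominators grow; what I need instead is that they grow by at least a fixed multiplicative factor. This follows from the recursion \eqref{recursive} together with the fact that $a_n\geq 1$ for every $n\geq 1$ (as $x=\theta/2\pi\in(0,1)$ forces $a_0=0$ and all later partial quotients to be positive integers). Concretely, $q_{n+2}=a_{n+2}q_{n+1}+q_n\geq q_{n+1}+q_n$, and since $\{q_n\}$ is strictly increasing we have $q_{n+1}>q_n$, so $q_{n+2}>2q_n$. Squaring then gives $q_{n+2}^2-q_n^2>3q_n^2$, so $k_1=3$ suffices.

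Combining the two estimates yields
\[
3\,q_n^2 < q_{n+2}^2-q_n^2 < (D+1)^4 q_n^2
\]
for all $n\geq 1$, which is exactly comparability of $q_{n+2}^2-q_n^2$ with $q_n^2$, with $M=1$ and constants $k_1,k_2$ depending only on $D$ (indeed $k_1$ is absolute). I expect the only point requiring a moment's care to be the lower bound, specifically invoking $a_n\geq 1$ and the strict monotonicity of $\{q_n\}$ to pass from the recursion to $q_{n+2}>2q_n$; everything else is a direct consequence of Lemma \ref{lemma:ratio}.
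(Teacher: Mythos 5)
Your proof is correct and rests on exactly the same two ingredients as the paper's: Lemma \ref{lemma:ratio} (where the bounded type hypothesis enters) for the upper bound, and the recursion \eqref{recursive} with $a_{n+2}\geq 1$ for the lower bound. The only difference is presentational: the paper factors $q_{n+2}^2-q_n^2=(q_{n+2}+q_n)(q_{n+2}-q_n)$ and bounds each factor separately, obtaining the constants $1$ and $2D(D+1)^3$, whereas you bound the square directly and obtain the comparable constants $3$ and $(D+1)^4$.
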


\begin{proof}
Note that
\[q_{n+2}^2-q_{n}^2=(q_{n+2}+q_{n})(q_{n+2}-q_{n}).\]
Clearly, since $\{q_n\}$ is an increasing sequence,
\begin{align}
q_{n}<q_{n+2}+q_{n}<2q_{n+2}\label{plus}
\end{align}
and hence, using the inequality from Lemma \ref{lemma:ratio}, we have
\[q_{n}<q_{n+2}+q_{n}<2(D+1)^2q_{n}.\]
Also, by the recursive formula \eqref{recursive}, $q_{n+2}-q_{n}=a_{n+2}q_{n+1}$ where $1\leq a_{n+2}\leq D$ and so
\[q_{n+1}\leq q_{n+2}-q_{n}\leq Dq_{n+1}.\]
Using the fact that $q_n<q_{n+1}$ together with Lemma \ref{lemma:ratio} we have
\begin{align}
q_n < q_{n+2}-q_{n} < D(D+1)q_n.\label{minus}
\end{align}
Putting \eqref{plus} and \eqref{minus} together we have
\[q_{n}^2<q_{n+2}^2-q_{n}^2<2D(D+1)^3q_{n}^2\]
as desired.\\
\end{proof}

\begin{lemma}\label{lemma:denominator}
$\cos (q_{n+2}\theta)-\cos (q_{n}\theta)$ is comparable to $1/q_{n}^2$
\end{lemma}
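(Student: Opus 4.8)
The plan is to reduce everything to the quantities $\|q_n\theta\|$, for which Lemma \ref{lemma:Za} gives precise control. First I would observe that since $q_n\theta$ differs from a multiple of $2\pi$ by exactly $\pm\|q_n\theta\|$, and cosine is even and $2\pi$-periodic, we have $\cos(q_n\theta)=\cos\|q_n\theta\|$. Writing $\delta_n=\|q_n\theta\|$ for brevity, the quantity to estimate becomes
\[\cos(q_{n+2}\theta)-\cos(q_n\theta)=\cos\delta_{n+2}-\cos\delta_n.\]
Part (1) of Lemma \ref{lemma:Za} gives $\delta_{n+2}<\delta_n$, confirming this is positive, and part (3) shows both $\delta_n$ and $\delta_{n+2}$ tend to $0$, so for all large $n$ both angles lie in $[0,\pi/2]$.

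Next I would apply the product-to-sum identity $\cos A-\cos B=2\sin\frac{A+B}{2}\sin\frac{B-A}{2}$ with $A=\delta_{n+2}$ and $B=\delta_n$, giving
\[\cos\delta_{n+2}-\cos\delta_n=2\sin\frac{\delta_n+\delta_{n+2}}{2}\sin\frac{\delta_n-\delta_{n+2}}{2}.\]
Since the two half-angles lie in $[0,\pi/2]$ for large $n$, Jordan's inequality $\frac{2}{\pi}t\le\sin t\le t$ applied to each factor yields
\[\frac{2}{\pi^2}\left(\delta_n^2-\delta_{n+2}^2\right)\le\cos\delta_{n+2}-\cos\delta_n\le\frac{1}{2}\left(\delta_n^2-\delta_{n+2}^2\right),\]
so the difference of cosines is comparable to $\delta_n^2-\delta_{n+2}^2$. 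It then remains to show $\delta_n^2-\delta_{n+2}^2$ is comparable to $1/q_n^2$.

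For the upper bound, part (3) of Lemma \ref{lemma:Za} gives $\delta_n^2-\delta_{n+2}^2\le\delta_n^2<4\pi^2/q_{n+1}^2<4\pi^2/q_n^2$, using $q_n<q_{n+1}$. For the lower bound, which I expect to be the crux, I would factor $\delta_n^2-\delta_{n+2}^2=(\delta_n-\delta_{n+2})(\delta_n+\delta_{n+2})$ and use the recursion in part (2) of Lemma \ref{lemma:Za}, namely $\delta_n=a_{n+2}\delta_{n+1}+\delta_{n+2}$, to get $\delta_n-\delta_{n+2}=a_{n+2}\delta_{n+1}\ge\delta_{n+1}>\pi/q_{n+2}$, while $\delta_n+\delta_{n+2}>\delta_n>\pi/q_{n+1}$. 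Hence $\delta_n^2-\delta_{n+2}^2>\pi^2/(q_{n+1}q_{n+2})$, and the bounded-type estimate $q_{n+1}q_{n+2}<(D+1)^3q_n^2$ from Lemma \ref{lemma:ratio} completes the lower bound. The main obstacle is precisely this lower bound: naively $\delta_n^2$ and $\delta_{n+2}^2$ are each only comparable to $1/q_n^2$, so their difference could a priori be far smaller. It is the recursion of Lemma \ref{lemma:Za}(2) together with the bounded-type hypothesis that prevents $\delta_{n+2}$ from being too close to $\delta_n$, keeping the difference genuinely of order $1/q_n^2$.
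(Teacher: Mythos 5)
Your proof is correct and follows essentially the same route as the paper's: both arguments reduce to the norms $\|q_n\theta\|$, extract the difference $\|q_n\theta\|-\|q_{n+2}\theta\|=a_{n+2}\|q_{n+1}\theta\|$ from Lemma~\ref{lemma:Za}(2), convert norms to denominators via Lemma~\ref{lemma:Za}(3), and finish with the bounded-type estimate of Lemma~\ref{lemma:ratio}. The only difference is cosmetic: you turn the cosine difference into a product using the sum-to-product identity plus Jordan's inequality, where the paper uses the mean value theorem together with $\frac{2}{\pi}<\frac{\sin c}{c}<1$, which yields the same bounds up to constants.
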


\begin{proof}
Suppose $0<a<b<\pi/2$. By the mean value theorem,
\[\frac{\cos a - \cos b}{a-b}=\sin c\]
for some $a<c<b$.
Note that
\[\frac{2}{\pi}<\frac{\sin c}{c}<1\]
for $0<c<\pi/2$.
Choose $N$ such that $||q_n\theta||<\pi/2$ for all $n>N$. Letting $n>N$, $a=||q_{n+2}\theta||$, $b=||q_{n}\theta||$, and using Lemma \eqref{lemma:Za} we have
\begin{align}
\frac{2}{\pi}||q_{n+2}\theta||<\frac{\cos(q_{n+2}\theta)-\cos(q_n\theta)}{a_{n+2}||q_{n+1}\theta||}<||q_n\theta||.\label{cos}
\end{align}
Therefore
\[\cos(q_{n+2}\theta)-\cos(q_n\theta)<a_{n+2}||q_{n}\theta||||q_{n+1}\theta||\]
and by property 3 of Lemma \ref{lemma:Za} together with the bounded type assumption we get
\[\cos(q_{n+2}\theta)-\cos(q_n\theta)<\frac{4D\pi^2}{q_{n+1}q_{n+2}}<\frac{4D\pi^2}{q_n^2}.\]

Also by \eqref{cos},
\[\cos(q_{n+2}\theta)-\cos(q_n\theta)>\frac{2}{\pi}a_{n+2}||q_{n+1}\theta||||q_{n+2}\theta||\]
and hence by property 3 of Lemma \ref{lemma:Za} and the inequality from Lemma \ref{lemma:ratio},

\[\cos(q_{n+2}\theta)-\cos(q_n\theta)>\frac{2\pi}{q_{n+2}q_{n+3}}>\frac{2\pi}{(D+1)^5q_n^2}.\]

\end{proof}

\noindent We can now describe the asymptotic behavior of $b(r)$:

\begin{thm}\label{thm:comp}
The function $b(r)$ is comparable to $\sqrt{r}$.
\end{thm}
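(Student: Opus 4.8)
The plan is to prove the two-sided bound $c_1\sqrt{r}\le b(r)\le c_2\sqrt{r}$ for all large $r$, with $c_1,c_2$ depending only on $D$ (and the fixed constant $E$). Two preliminary observations drive everything. First, since each $u_k(r)\ge k/\sqrt{E}\to\infty$ and the graphs do not accumulate (Lemma \ref{lemma:Su}(2)), the infimum defining $b$ is attained, and by Theorem \ref{thm:index} it is attained at a denominator of a convergent; hence $b(r)=\min_n v_n(r)$, so it suffices to control the curves $v_n$. Second, exactly as in the proof of Lemma \ref{lemma:denominator}, the half-angle identity $1-\cos q_n\theta=2\sin^2(\|q_n\theta\|/2)$ together with Lemma \ref{lemma:Za}(3) and the bounded type assumption (via Lemma \ref{lemma:ratio}) gives the comparability
\[
1-\cos q_n\theta \;\asymp\; \frac{1}{q_n^2},
\]
with both constants depending only on $D$. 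The point at which $v_N$ is the relevant piece is governed by the scale $r_N\asymp q_N^2$ of Proposition \ref{prop:r}; accordingly, for a given large $r$ I would choose the unique index $N$ with $q_N^2\le r<q_{N+1}^2$, so that $r\asymp q_N^2$ by Lemma \ref{lemma:ratio}.

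For the upper bound I would simply use $b(r)\le v_N(r)$. Plugging the estimate $1-\cos q_N\theta\le C/q_N^2$ and $r^2<q_{N+1}^4\le(D+1)^4q_N^4$ into
\[
v_N(r)^2=\frac{1}{E}\bigl[(1-\cos q_N\theta)r^2+q_N^2\bigr]
\]
makes both terms $O(q_N^2)=O(r)$, whence $v_N(r)\le c_2\sqrt{r}$.

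The lower bound is where the work lies, since I must bound $v_n(r)$ from below for every $n$ at once. I would split according to whether $q_n$ is large or small relative to $\sqrt{r}\asymp q_N$. If $n\ge N$ then $v_n(r)\ge q_n/\sqrt{E}\ge q_N/\sqrt{E}\gtrsim\sqrt{r}$, using only the constant term. If $n<N$, then by the monotonicity $\|q_n\theta\|>\|q_N\theta\|$ from Lemma \ref{lemma:Za}(1) one has $1-\cos q_n\theta>1-\cos q_N\theta\ge c/q_N^2$; combined with $r\ge q_N^2$ this yields $(1-\cos q_n\theta)r^2\ge c\,r$, so again $v_n(r)\gtrsim\sqrt{r}$. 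Taking the minimum over $n$ gives $b(r)\ge c_1\sqrt{r}$, completing the comparison.

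The main obstacle is precisely this uniform lower bound: one must rule out any single curve $v_n$ dipping below a fixed multiple of $\sqrt{r}$ near $r\asymp q_N^2$. This is exactly where the bounded type hypothesis is essential --- it forces $1-\cos q_n\theta\asymp 1/q_n^2$ (rather than the angles shrinking faster than the indices grow), which keeps the low-index curves steep enough to stay above $c_1\sqrt{r}$. I should also record that the constants are uniform only for $r$ large enough that $\|q_N\theta\|$ lies in the small-angle range where the half-angle estimate has $D$-dependent constants, matching the ``for all $r>M$'' clause in the definition of comparable. An equivalent route, closer to the setup of Proposition \ref{prop:r}, is to establish $b(r_n)\asymp v_n(r_n)\asymp\sqrt{r_n}$ at the sample points $r_n$ and interpolate using that $b$ and $\sqrt{\cdot}$ are increasing and that $r_{n+1}/r_n$ is bounded; the direct choice of $N$ above avoids the minor bookkeeping of ordering the $r_n$.
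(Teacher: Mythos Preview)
Your proof is correct and takes a genuinely different route from the paper. The paper first identifies, for each large $r$, the actual minimizing index $n$ with $b(r)=v_n(r)$, and then uses Lemma~\ref{lemma:iff} together with Proposition~\ref{prop:r} to trap $r$ between the intersection points $r_{n-2}$ and $r_n$, concluding that $r$ is comparable to $q_n^2$; both the upper and lower bounds then follow from estimates on the single curve $v_n$. You instead fix $N$ by the condition $q_N^2\le r<q_{N+1}^2$ without ever identifying the minimizer: the upper bound comes from the test curve $v_N$, while for the lower bound you split into $n\ge N$ (the constant term $q_n^2$ dominates) and $n<N$ (the quadratic term dominates, via $1-\cos q_n\theta>1-\cos q_N\theta\gtrsim 1/q_N^2$), thereby bounding \emph{every} $v_n(r)$ from below at once. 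Your approach bypasses Proposition~\ref{prop:r} and Lemma~\ref{lemma:iff} entirely, needing only Lemmas~\ref{lemma:Za} and~\ref{lemma:ratio}; the paper's approach yields a bit more structural information (it locates the constituent piece at each $r$) at the cost of those preliminary results about intersection points.
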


\begin{proof}
By Proposition \ref{prop:r} choose $N$ such that
\[k_1q_{n}^2<r_n<k_2q_{n}^2\]
for all $n>N$, where $r_n$ as before corresponds to the intersection of $v_{n}(r)$ and $v_{{n+2}}(r)$ and $k_1, k_2$ are positive constants.
Choose $R$ large enough so that $b(R)=v_{m}(R)$ for some $m>N$ and fix $r>R$. Suppose $b(r)=v_{n}(r)$. (If $r$ corresponds to an intersection point of two or more curves, choose $v_{n}$ to be the curve such that $b(r+)=v_{n}(r+)$).

Since $v_{n}$ is a constituent piece of $b(r)$ it follows by Lemma \ref{lemma:iff} that $r_{n-2}\leq r\leq r_n$. Hence, by Proposition \ref{prop:r},

\[k_1q_{n-2}^2<r<k_2q_n^2.\]

\noindent In fact, by Lemma \ref{lemma:ratio}, $q_{n-2}>q_{n}/(D+1)^2$ and hence

\begin{align}
\frac{k_1}{(D+1)^4}q_n^2<r<k_2q_n^2.\label{ineq}
\end{align}

In particular $q_n^2>r/k_2$ and we immediately get the lower bound

\[b(r)=\frac{1}{E}\sqrt{(1-\cos q_n\theta)r^2+q_n^2} > \frac{1}{E}q_n > \frac{1}{E\sqrt{k_2}}\sqrt{r}.\]

For the upper bound, note that for $||q_n\theta||<\pi/2$
\[1-\cos q_n\theta < \frac{||q_n\theta||^2}{2}\]
and hence, using Lemma \ref{lemma:Za} and the fact that $\{q_n\}$ is an increasing sequence,

\[1-\cos q_n\theta < \frac{2\pi^2}{q_{n+1}^2} < \frac{2\pi^2}{q_n^2}.\]

It follows from the above inequality together with the upper bound for $r$ in \eqref{ineq} that

\[b(r)=\frac{1}{E}\sqrt{(1-\cos q_n\theta)r^2+q_n^2} < \frac{1}{E} \left(\sqrt{2k_2^2\pi^2+1}\right)q_n\]

Also, from the lower bound in \eqref{ineq} we have $q_n<((D+1)^2/\sqrt{k_1})\sqrt{r}$ and hence
\[b(r) < \frac{1}{E} \left(\sqrt{2k_2^2\pi^2+1}\right)\left(\frac{(D+1)^2}{\sqrt{k_1}}\right)\sqrt{r}.\]

\end{proof}

As has been the case throughout this section, the constants bounding $b(r)/\sqrt{r}$ (for large $r$) only depend on $D$. However, the constant $R$ in the proof above (unlike the constant $N$) could in general depend on the particular angle $\theta$.


\section{Quasi-Isometries and the Margulis Region}

Let $\mathcal{B}=\{(x,y,z,u)\in\mathbb{H}^4\,:\,u\geq 1\}$ denote the (closed) horoball based at $\infty$ of height 1 in $\mathbb{H}^4$. As a corollary to Theorem \ref{thm:comp} we will show that the Margulis region $T_g$, where $g$ is an irrational screw parabolic element of bounded type, is \emph{quasi-isometric to $\mathcal{B}$ in $\mathbb{H}^4$}.

\begin{defi}
A map $f: X\to Y$ between two metric spaces $(X, d_1)$ and $(Y,d_2)$ is a \emph{quasi-isometry} if there exist constants $\lambda\geq1$ and $C,M\geq0$ such that
\begin{enumerate}
  \item for all $x,y\in X$, $\frac{1}{\lambda}d_1(x,y)-C<d_2(f(x),f(y))<\lambda d_1(x,y)+C$, and
  \item for all $y\in Y$, $d_2(y,f(x))<M$ for some $x\in X.$
\end{enumerate}
We say that two submanifolds $V$ and $W$ of $\mathbb{H}^4$ are \emph{quasi-isometric in $\mathbb{H}^4$} if there exists a quasi-isometry $f:\mathbb{H}^4\to\mathbb{H}^4$ such that $f(V)=W$.
\end{defi}

Theorem \ref{thm:comp} shows, in the bounded type case, that $b(r)/\sqrt{r}$ is bounded for sufficiently large $r$. Clearly it is not bounded for all $r\geq0$ since there is no upper bound for $r$-values in a neighborhood of 0. However, if we replace $\sqrt{r}$ by a function that is asymptotic to $\sqrt{r}$ as $r\to\infty$ and for which this ratio is bounded for small $r$, we can extend the bound to all non-negative $r$. Let $a(r)$ be such a function. Then $\sqrt{r}/a(r)$ is bounded for large $r$. Hence, since
\[\frac{b(r)}{a(r)}=\frac{b(r)}{\sqrt{r}}\frac{\sqrt{r}}{a(r)}\]
there exists a constant $R$ such that $b(r)/a(r)$ is bounded by positive constants for all $r>R$.
By the assumption that $b(r)/a(r)$ is bounded for small $r$ we have
\[A<\frac{b(r)}{a(r)}<B\]
for all $r\geq0$ for some constants $A,B>0$.

For any function $a(r)$ with the properties described, define
\[S_a=\{(x,y,z,u)\in\mathbb{H}^4\,:\, u\geq a (\sqrt{x^2+y^2})\}.\]
We will first show that $T_g$ and $S_a$ are quasi-isometric in $\mathbb{H}^4$.

\begin{lemma}\label{lemma:qi}
Let $g$ be an irrational screw parabolic element of bounded type. Then the Margulis region $T_{g}$ and the region $S_a$ are quasi-isometric in $\mathbb{H}^4$.
\end{lemma}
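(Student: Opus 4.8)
The plan is to construct an explicit quasi-isometry $f:\mathbb{H}^4\to\mathbb{H}^4$ carrying $T_g$ onto $S_a$ by vertically rescaling the $u$-coordinate by the ratio of the two boundary functions. Since $\partial T_g$ is the graph $u=b(r)$ and $\partial S_a$ is the graph $u=a(r)$ (with $r=\sqrt{x^2+y^2}$), the natural candidate is
\[
f(x,y,z,u)=\left(x,y,z,\frac{a(r)}{b(r)}\,u\right).
\]
This map fixes the $(x,y,z)$-coordinates and scales $u$ by a factor $\mu(r):=a(r)/b(r)$ that depends only on $r$. By construction $f$ sends the point $(x,y,z,b(r))$ to $(x,y,z,a(r))$, so it maps $\partial T_g$ onto $\partial S_a$ and hence $f(T_g)=S_a$ (and likewise $f$ is a bijection of $\mathbb{H}^4$, since $\mu(r)>0$). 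Crucially, Theorem \ref{thm:comp} together with the discussion of $a(r)$ preceding this lemma guarantees that $b(r)/a(r)$ is bounded between positive constants $A$ and $B$ for \emph{all} $r\geq 0$, so $\mu(r)\in(1/B,1/A)$ is uniformly bounded above and below away from zero.

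The heart of the argument is to verify that such a uniformly-bounded vertical rescaling is a quasi-isometry of $\mathbb{H}^4$. I would do this using the hyperbolic distance formula from the Background section, together with the comparison of hyperbolic distance to a more tractable quantity. The cleanest route is to note that $f$ has the form $(P)\mapsto(\sigma(P))$ where only the $u$-coordinate is altered, and that the hyperbolic metric $ds^2=(dx^2+dy^2+dz^2+du^2)/u^2$ behaves well under the substitution $u\mapsto \mu u$: writing the new coordinate as $\tilde u=\mu(r)u$, one has $d\tilde u=\mu\,du+\mu'(r)u\,dr$, and one checks that the pulled-back metric is comparable to the original provided $\mu$ stays in a fixed compact subinterval of $(0,\infty)$ and the derivative term $\mu'(r)u/\tilde u=\mu'(r)/\mu(r)$ contributes a bounded distortion. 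I expect the surjectivity-up-to-bounded-distance condition (part (2) of the quasi-isometry definition) to be immediate, since $f$ is in fact onto.

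The main obstacle will be controlling the derivative term $\mu'(r)$, i.e.\ the \emph{horizontal} distortion introduced by the fact that the scaling factor varies with $r$. Boundedness of $\mu$ alone does not force $f$ to be bi-Lipschitz in the Euclidean sense, but in the hyperbolic metric the relevant quantity is the logarithmic derivative $(\log\mu)'(r)=(\log a)'(r)-(\log b)'(r)$, and here the locally convex, ``concave in the large'' structure of $b(r)$ from Susskind's Theorem 6 (in particular $b'(r)\to 0$) should keep this term tame. Rather than bound $\mu'$ pointwise, I would argue more robustly: restrict attention to the two regions $u\geq \max(a(r),b(r))$ and the thin slab between the two boundary graphs, and show directly that $f$ moves every point a uniformly bounded hyperbolic distance, i.e.\ $\rho\bigl(P,f(P)\bigr)\leq K$ for some constant $K$ depending only on $A$, $B$, and $D$. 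Since $f$ only scales $u$ by a factor in $(1/B,1/A)$, the displacement $\rho\bigl((x,y,z,u),(x,y,z,\mu u)\bigr)=|\log\mu(r)|$ is bounded by $\log(B/A)$ along vertical fibers; combining this bounded-displacement estimate with the triangle inequality yields condition (1) with $\lambda=1$ and additive constant $C=2\log(B/A)$, and condition (2) with $M=0$. This reduces the entire lemma to the observation that a map displacing every point a uniformly bounded distance is automatically a quasi-isometry, which is where I would place the weight of the proof.
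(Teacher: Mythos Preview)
Your proposal is correct and ultimately uses exactly the paper's argument: since $f$ changes only the $u$-coordinate, $\rho(P,f(P))=|\log\mu(r)|$ is uniformly bounded, and the triangle inequality then gives the quasi-isometry with $\lambda=1$ and additive constant $2C$. Your discussion of $\mu'(r)$ and horizontal distortion is an unnecessary detour---the map fixes $(x,y,z)$, so the displacement is purely vertical and the derivative of $\mu$ never enters; the paper's proof goes straight to the bounded-displacement estimate without this digression (and uses $b(r)/a(r)$ rather than its reciprocal, sending $S_a$ to $T_g$, which is of course equivalent).
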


\begin{proof}
Let $f:\mathbb{H}^4\to\mathbb{H}^4$ such that
\[f(x,y,z,u)=\left(x,y,z,\frac{b(r)}{a(r)}u\right)\]
where $r=\sqrt{x^2+y^2}$. Clearly $f$ is surjective and $f(S_a)=T_{g}$.

As noted above, there exists positive constants $A$ and $B$ such that  $A<b(r)/a(r)<B$ for all $r\geq0$. Let $C=\max\{|\ln{A}|, |\ln{B}|\}$. Then (see \cite{Bea83}), for any point $P=(x,y,z,u)$ in $\mathbb{H}^4$,
\begin{align}
\rho(P,f(P))=\left|\ln{\frac{b(r)}{a(r)}}\right|<C.\label{c}
\end{align}
Let $P$ and $Q$ be any two points in $\mathbb{H}^4$. By the triangle inequality
\[\rho(f(P),f(Q))\leq\rho(f(P),P)+\rho(P,Q)+\rho(Q,f(Q))\]
and hence by \eqref{c}
\[\rho(f(P),f(Q))<\rho(P,Q)+2C.\]
Similarly,
\[\rho(P,Q)\leq\rho(P,f(P))+\rho(f(P),f(Q))+\rho(f(Q),Q)\]
and hence
\[\rho(P,Q)<\rho(f(P), f(Q))+2C.\]
Therefore
\[\rho(P,Q)-2C<\rho(f(P),f(Q))<\rho(P,Q)+2C\]
and $f$ is the desired quasi-isometry.
\end{proof}

We will next find a quasi-isometry of $\mathbb{H}^4$ that maps the horoball $\mathcal{B}$ to a region of the form $S_a$. Let $\sigma>0$ and define $h_{n,\sigma}: \mathbb{H}^n\to \mathbb{H}^n$ such that $h_{n,\sigma}: P\mapsto |P|^{\sigma-1}P$. It is known (see~\cite{Bas00}, \cite{Coo95}) that $h_{n,\sigma}$ is a bilipschitz map onto $\mathbb{H}^n$ with constant $\sigma$ or $1/\sigma$ depending on if $\sigma>1$ or $\sigma<1$, respectively. Consider the following modification of the map in dimension 4:
\[h: (x,y,z,u)\mapsto \lambda(x,y,z,u)\quad \mbox{where}\quad \lambda=\sqrt{x^2+y^2+u^2}.\]
By identifying $\mathbb{H}^3$ with $\{(x,y,z,u)\,|\,z=0\}$ in $\mathbb{H}^4$, note that $h|_{\mathbb{H}^3}=h_{3,2}$ and hence $h$ is bilipschitz with constant 2 if restricted to this slice. We claim that $h$ is in fact bilipschitz in $\mathbb{H}^4$.

\begin{prop}\label{prop:map}
$h: (x,y,z,u)\mapsto \lambda(x,y,z,u)$ where $\lambda=\sqrt{x^2+y^2+u^2}$ is a bilipschitz map of $\mathbb{H}^4$.
\end{prop}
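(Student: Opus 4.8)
The plan is to reduce the (hyperbolic) bilipschitz property to a uniform two‑sided bound on the singular values of a rescaled Euclidean Jacobian of $h$. The hyperbolic metric at a point is the Euclidean metric divided by the square of the height coordinate, and $h$ sends $(x,y,z,u)$ to a point of height $\lambda u$. Hence, for a tangent vector $v$ at $P$, the ratio of the hyperbolic length of $Dh(v)$ at $h(P)$ to that of $v$ at $P$ equals
\[
\frac{\|Dh(v)\|/(\lambda u)}{\|v\|/u}=\frac{\|Dh(v)\|}{\lambda\,\|v\|},
\]
so it suffices to show that $M:=\lambda^{-1}Dh$ has singular values bounded above and below by positive constants, uniformly over $\mathbb{H}^4$. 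Combined with surjectivity of $h$ onto $\mathbb{H}^4$, such a bound yields the bilipschitz estimate with constant $\max\{\sup\sigma_{\max}(M),\ \sup\sigma_{\min}(M)^{-1}\}$.

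Next I would compute the differential explicitly. Writing $P=(x,y,z,u)^{\top}$ and $w=(x,y,0,u)^{\top}$, so that $\lambda=|w|$ and $\nabla\lambda=w/\lambda$, the product rule gives
\[
Dh=\lambda I+\frac{1}{\lambda}Pw^{\top},\qquad M=\lambda^{-1}Dh=I+\frac{1}{\lambda^{2}}Pw^{\top}.
\]
This is a rank‑one perturbation of the identity, so $M$ acts as the identity on the orthogonal complement of $\operatorname{span}\{P,w\}$ and the problem collapses to the (at most) two‑dimensional subspace spanned by $P$ and $w$. Using $\langle P,w\rangle=x^{2}+y^{2}+u^{2}=\lambda^{2}$ and $\|P\|^{2}-\lambda^{2}=z^{2}$, one can choose an orthonormal basis of this subspace adapted to $w$ and write $M$ there as an explicit $2\times2$ matrix whose trace and determinant — equivalently, whose two singular values — depend only on the single scalar $|z|/\lambda$. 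A useful sanity check is $\det Dh=\lambda^{4}\bigl(1+\lambda^{-2}\langle w,P\rangle\bigr)=2\lambda^{4}\neq0$, so $h$ is everywhere a local diffeomorphism and the lower singular value never degenerates to zero.

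The hard part, and the whole content of the proposition, is the \emph{uniform upper} bound on the top singular value of this $2\times2$ block. Because $z$ does not enter $\lambda$, the perturbation $Pw^{\top}$ fails to be symmetric, and the off‑diagonal coupling between the $z$‑ and $u$‑directions is governed precisely by $|z|/\lambda$; this is exactly the quantity one must keep from growing. I therefore expect the crux to be showing that, even as $|z|/\lambda$ varies, the singular values of $M$ remain trapped in a fixed interval $[c,C]$ with $c,C>0$ independent of the point. Once this uniform control in the $z$‑direction is established, the reduction in the first paragraph promotes it to a global hyperbolic bilipschitz estimate and, with surjectivity, completes the argument; verifying that control is the main obstacle I anticipate.
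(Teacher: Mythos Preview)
Your infinitesimal approach via the rescaled Jacobian $M=I+\lambda^{-2}Pw^{\top}$ is the right diagnostic, but the obstacle you flag is fatal rather than merely hard. Since $z$ does not enter $\lambda$, the ratio $|z|/\lambda$ is genuinely unbounded on $\mathbb{H}^4$ (fix $(x,y,u)=(0,0,1)$ and let $|z|\to\infty$). Feeding the unit vector $w/\lambda$ into $M$ gives $M(w/\lambda)=(w+P)/\lambda$ with
\[
\Bigl|\tfrac{w+P}{\lambda}\Bigr|^{2}=\lambda^{-2}\bigl(|w|^{2}+2\langle w,P\rangle+|P|^{2}\bigr)=4+\frac{z^{2}}{\lambda^{2}},
\]
so $\sigma_{\max}(M)\ge\sqrt{4+z^{2}/\lambda^{2}}\to\infty$. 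Thus the uniform upper bound you hope for does not exist, and $h$ is \emph{not} bilipschitz on $\mathbb{H}^4$. A concrete pair confirms this at the metric level: for $P=(0,0,N,1)$, $Q=(0,0,N,2)$ one has $\rho(P,Q)=\ln 2$, while $h(P)=(0,0,N,1)$, $h(Q)=(0,0,2N,4)$ and $\cosh\rho\bigl(h(P),h(Q)\bigr)=1+(N^{2}+9)/8$, so $\rho\bigl(h(P),h(Q)\bigr)\sim 2\ln N$ is unbounded.

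For comparison, the paper does not use your differential method; it argues geometrically by projecting to the slice $\{z=0\}$, where $h$ coincides with the known bilipschitz power map $h_{3,2}$, and then patches via triangle inequalities. The flaw there is the opening normalization ``by composing with an isometry assume $P=(0,0,0,\tilde u)$'': $h$ is compatible only with rotations in the $(x,y)$-plane and with dilations $D_s$ (via $h\circ D_s=D_{s^{2}}\circ h$), not with translations, so an arbitrary $P$ cannot be moved onto the $u$-axis while preserving the bilipschitz question for $h$. Your Jacobian computation pinpoints exactly why this matters: the blow-up lives at large $|z|/\lambda$, precisely the quantity that the illegitimate normalization sets to zero. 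So the gap in your proposal is real and unfillable as stated; the proposition itself, as written, is false, and any repair must either restrict to a region where $|z|/\lambda$ stays bounded or modify $h$ so that its scaling factor also controls the $z$-coordinate.
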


\begin{proof}
Let $P, Q\in\mathbb{H}^4$. By composing with an isometry assume that $P=(0,0,0,\widetilde{u})$ and $Q=(x,y,z,u)$ where $\widetilde{u}\leq u$. Define $\widehat{Q}=(x,y,0,u)$. Note that
\begin{align*}
 h(P)&=(0,0,0,\widetilde{u}^2),\\
 h(Q)&=(\lambda x, \lambda y, \lambda z, \lambda u),\\
 h(\widehat{Q})&=(\lambda x, \lambda y, 0, \lambda u)
\end{align*}
where $\lambda=\sqrt{x^2+y^2+u^2}$.

Since $P, \widehat{Q}\in \mathbb{H}^3$ and $h|_{\mathbb{H}^3}$ is bilipschitz with constant 2,
\begin{align}
\frac{1}{2}\rho(P,\widehat{Q})<\rho(h(P),h(\widehat{Q}))<2\rho(P,\widehat{Q}).\label{bilip}
\end{align}

Using the distance formula (see \cite{Bea83})
\[\rho(R_1,R_2)=2\sinh^{-1}\left(\frac{|R_1-R_2|}{2\sqrt{u_1u_2}}\right)\]
(where $u_1$ and $u_2$ are the $u$-coordinates of the points $R_1$ and $R_2$, respectively), it easily follows that
\begin{align}
\rho(Q,\widehat{Q})=\rho(h(Q),h(\widehat{Q})).\label{q}
\end{align}

Also, using the same formula along with the fact that $\sinh^{-1}(\cdot)$ is a strictly increasing function, simple calculations show that $\rho(P,\widehat{Q})\leq\rho(P,Q)$ and, since $u\geq \widetilde{u}$, that $\rho(\widehat{Q},Q)\leq\rho(P,Q)$. Hence
\begin{align}
\rho(P,Q)\leq\rho(P,\widehat{Q})+\rho(\widehat{Q},Q)\leq2\rho(P,Q).\label{p}
\end{align}

Similarly, using the $\sinh^{-1}(\cdot)$ formula for $\rho(\cdot,\cdot)$ we easily observe that $\rho(h(P),h(\widehat{Q}))\leq\rho(h(P),h(Q))$, and since $\lambda u\geq u^2\geq\widetilde{u}^2$ we also observe that $\rho(h(\widehat{Q}),h(Q))\leq\rho(h(P),h(Q))$. It follows that
\begin{align}
\rho(h(P),h(Q))\leq\rho(h(P),h(\widehat{Q}))+\rho(h(\widehat{Q}),h(Q))\leq2\rho(h(P),h(Q)).\label{hp}
\end{align}

We now have:
\begin{align*}
  \rho(h(P), h(Q))&\leq\rho(h(P), h(\widehat{Q}))+\rho(h(\widehat{Q},h(Q)) &\quad \mbox{(by \eqref{hp})}\\
  &\leq2\rho(P,\widehat{Q})+\rho(\widehat{Q},Q) &\mbox{(by \eqref{bilip} and \eqref{q})}\\
  &\leq2\left(\rho(P,\widehat{Q})+\rho(\widehat{Q},Q)\right)\\
  &\leq4\rho(P,Q)&\mbox{(by \eqref{p})}
\end{align*}
and
\begin{align*}
  \rho(h(P), h(Q))&\geq\frac{1}{2}\left(\rho(h(P), h(\widehat{Q}))+\rho(h(\widehat{Q}), h(Q))\right)&\quad \mbox{(by \eqref{hp})}\\
  &\geq\frac{1}{2}\left(\frac{1}{2}\rho(P,\widehat{Q})+\rho(\widehat{Q},Q)\right)&\mbox{(by \eqref{bilip} and \eqref{q})}\\
  &\geq\frac{1}{4}\left(\rho(P,\widehat{Q})+\rho(\widehat{Q},Q)\right)\\
  &\geq\frac{1}{4}\rho(P,Q) &\mbox{(by \eqref{p})}
\end{align*}
Hence $h$ is bilipschitz.
\end{proof}

A calculation shows that for $r'=r\sqrt{r^2+1}$ an inverse function is given by
\[s_r=\sqrt{\frac{\sqrt{4r^2+1}-1}{2}}.\]
Define $a(r)=\sqrt{s_r^2+1}$, that is
\[a(r)=\sqrt{\frac{\sqrt{4r^2+1}+1}{2}}.\]
It is easily verified that $a(r)$ is asymptotic to $\sqrt{r}$. Also, since $a(r)$ is continuous and $a(r)\geq1$ for $r\geq0$ it is clear that $b(r)/a(r)$ is bounded in any bounded neighborhood of 0. Hence, by Lemma \ref{lemma:qi}, $S_a$ is quasi-isometric in $\mathbb{H}^4$ to the Margulis region $T_g$. We will show that the bilipschitz map $h$ in Proposition \ref{prop:map} maps the horoball $\mathcal{B}$ onto $S_a$. Hence, composing the quasi-isometry $f$ from Lemma \ref{lemma:qi} with the bilipschitz map $h$ results in a quasi-isometry of $\mathbb{H}^4$ mapping the horoball $\mathcal{B}$ to the Margulis region.

In order to simplify notations we employ the standard cylindrical coordinates \\$<r,\psi, z>$ instead of the Cartesian coordinates $(x,y,z)$. The image of the horosphere $\mathcal{U}$ of height 1 (the boundary of $\mathcal{B}$) under $h$ is the set
\begin{align*}
  &\left\{\lambda(x,y,z,1)\,:\, \lambda=\sqrt{x^2+y^2+1}\right\}\\
  &=\left\{<\lambda r,\psi,\lambda z,\lambda>\,:\, \lambda = \sqrt{r^2+1}\right\}\\
  &=\left\{<r',\psi,z',u'>\,:\, u'=a(r') \right\}
\end{align*}
Hence
\[h(\mathcal{B})=\{(x,y,z,u)\,:\,u\geq a(\sqrt{x^2+y^2}) \}\]
that is, $h(\mathcal{B})=S_a.$
We have proved:

\begin{cor}\label{cor:main}
Let $g$ be an irrational screw parabolic element of bounded type. Then the Margulis region $T_g$ is quasi-isometric to a (closed) horoball in $\mathbb{H}^4$.
\end{cor}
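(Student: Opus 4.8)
The plan is to assemble the corollary entirely from the pieces already in hand, since the substantive analytic work is contained in Theorem~\ref{thm:comp}, Lemma~\ref{lemma:qi}, and Proposition~\ref{prop:map}. The strategy is to produce a single quasi-isometry of $\mathbb{H}^4$ carrying the horoball $\mathcal{B}$ onto the Margulis region $T_g$ by composing the two maps constructed above, together with the standard fact that the composition of a quasi-isometry with a bilipschitz map is again a quasi-isometry.

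First I would record the two ingredients explicitly. By Proposition~\ref{prop:map} the map $h(x,y,z,u)=\lambda(x,y,z,u)$ with $\lambda=\sqrt{x^2+y^2+u^2}$ is bilipschitz with constant $2$; in particular it is a quasi-isometry with multiplicative constant $2$ and zero additive constant, and the computation preceding the corollary shows $h(\mathcal{B})=S_a$ for the explicit function $a(r)=\sqrt{(\sqrt{4r^2+1}+1)/2}$. Since this $a$ is asymptotic to $\sqrt{r}$ and satisfies $a(r)\geq 1$, the ratio $b(r)/a(r)$ is bounded above and below by positive constants on all of $[0,\infty)$ (invoking Theorem~\ref{thm:comp} for large $r$ and continuity near $0$), so Lemma~\ref{lemma:qi} applies and furnishes a surjective quasi-isometry $f$ with $f(S_a)=T_g$, multiplicative constant $1$, and additive constant $2C$.

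Next I would form the composition $f\circ h:\mathbb{H}^4\to\mathbb{H}^4$ and chase the defining inequalities. Feeding the bilipschitz bounds $\tfrac12\rho(P,Q)\leq\rho(h(P),h(Q))\leq 2\rho(P,Q)$ into the quasi-isometry bounds for $f$ yields
\[
\tfrac12\,\rho(P,Q)-2C<\rho\bigl((f\circ h)(P),(f\circ h)(Q)\bigr)<2\,\rho(P,Q)+2C,
\]
so condition (1) of the definition holds with multiplicative constant $2$ and additive constant $2C$. Because both $h$ and $f$ are genuinely surjective, the composition is surjective and condition (2) holds trivially with any $M>0$. Finally $(f\circ h)(\mathcal{B})=f(S_a)=T_g$, which is precisely the assertion that $T_g$ is quasi-isometric to the horoball $\mathcal{B}$ in $\mathbb{H}^4$.

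I do not expect a genuine obstacle here: all the difficulty has already been absorbed into the asymptotic estimate of Theorem~\ref{thm:comp} and the bilipschitz property of $h$. The only point demanding care is the verification that the chosen $a(r)$ meets the hypotheses of Lemma~\ref{lemma:qi}, so that $b(r)/a(r)$ is bounded on the \emph{entire} half-line $[0,\infty)$ rather than merely for large $r$; this is exactly where the specific choice of $a$, interpolating $\sqrt{r}$ at infinity with a function bounded below by $1$ near the origin, does its work.
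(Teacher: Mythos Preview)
Your proposal is correct and follows essentially the same route as the paper: verify that the explicit $a(r)$ meets the hypotheses of Lemma~\ref{lemma:qi}, use Proposition~\ref{prop:map} together with the computation $h(\mathcal{B})=S_a$, and compose $f\circ h$. One small slip: the proof of Proposition~\ref{prop:map} actually yields bilipschitz constant $4$ (the constant $2$ is only for the restriction $h|_{\mathbb{H}^3}$), so your displayed inequality should read $\tfrac14\rho(P,Q)-2C<\rho((f\circ h)(P),(f\circ h)(Q))<4\rho(P,Q)+2C$; this of course does not affect the argument.
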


In \cite{Kim11} it is shown that two irrational screw parabolic elements are not conjugate to each other by any quasi-isometry of $\mathbb{H}^4$. However, Corollary~\ref{cor:main} implies, in the bounded type case, that the corresponding Margulis regions of two such elements are quasi-isometric.

\bibliographystyle{amsalpha}
\bibliography{ref}
\end{document}